\newtheorem{ass}{Assumption}
\def\As{\mathcal{A}}
\def\Fs{\mathcal{F}}
\def\erre{\mathbb{R}}
\def\enne{\mathbb{N}}
\def\vp{\varepsilon}
\def\Ec{\mathbb{E}}
\begin{document}

\frontmatter          
\pagestyle{empty}  

\title{A  Girsanov Result through Birkhoff Integral}
\titlerunning{A  Girsanov result ...}  
%
\author{Domenico Candeloro 
\and Anna Rita Sambucini\footnote{corresponding author\\
The  authors have been supported  by Fondo Ricerca di Base 2015 University of Perugia - titles: "$L^p$ Spaces in Banach Lattices with applications", "The Choquet integral with respect to fuzzy measures and applications"
  and by Grant  Prot. N.  U UFMBAZ2017/0000326 of GNAMPA -- INDAM (Italy).}  
}
\authorrunning{Candeloro -- Sambucini} 
\tocauthor{Domenico Candeloro and Anna Rita Sambucini}
\institute{Department of Mathematics and Computer Science, University of Perugia, Italy,\\
\email{ [domenico.candeloro, anna.sambucini]@unipg.it}\\
 orcid ID:{ 0000-0003-0526-5334,
 0000-0003-0161-8729}
}

\maketitle              

\begin{abstract}
A vector-valued version of the Girsanov theorem is presented, for a scalar process with respect to a Banach-valued measure.
Previously, a short discussion about the Birkhoff-type integration is outlined, as for example integration by substitution, in order to fix the measure-theoretic tools needed for the main result, Theorem 6, where a martingale equivalent to the underlying vector probability has been obtained in order to represent the modified process as a martingale  with the same marginals as the original one.
\end{abstract}
\keywords{Girsanov Theorem, martingale, Birkhoff integral}\\
{\bf  2010 AMS  Classification}: {28B05, 60G44}\\
\section{Introduction}\label{one}
In probability theory, the so-called Girsanov Theorem is a well-known result, whose interest lies both
in its theoretical features and in its technical consequences, see for examle \cite{Mik}.
The original formulation of this theorem is related to the Wiener measure, i.e. the {\em distribution} of the standard  Brownian Motion,
$(B_t)_{t\in [0,+\infty[}$, as a stochastic process on a probability space $(\Omega,\mathcal{A},P)$. \\
The
Girsanov Theorem is a fundamental tool for Stochastic Calculus and Randow Walks; this last mathematical model 
has many uses as a simulation tool:
Brownian Motion of Molecules,
stock prices and behavior of investors,
modeling of cascades of neuron firings in brain
and it has important practical uses in the internet:
Twitter uses random walks to suggest who to follow,
Google uses random walks to order pages which
match a search phrase.\\
In many concrete situations, particularly in stochastic calculus, the resultant processes $(\widetilde{B}_t)_t$ are usually obtained as 
suitable {\em transformations} of $(B_t)_t$, and so their distribution is different from the Wiener measure.
A typical situation is the following: assume that $a(t,\omega)$ is a stochastic process adapted to the Brownian Motion $(B_t)_t$, and define:
 $$(\widetilde{B}_t)_t:=\int_0^t a(t,\omega)dt+B(t).$$
Though the distribution of this process is different by the Wiener measure, the
 Girsanov theorem states that it is possible to endow the basic probability space 
with a new probability measure, $Q$ (which turns out to be absolutely continuous w.r.t. $P$), in such a way
that the distribution of  $(\widetilde{B}_t)_t$ under the new probability $Q$ is the Wiener measure, i.e. the same as $(B_t)_t$ under the 
original probability $P$.
This clearly simplifies  all calculations involving just the distribution of $(\widetilde{B}_t)_t$, since in the new probability space this
 process is the same as $(B_t)_t$. 

In the example outlined above, the measure $Q$ can be described by its derivative w.r.t. $P$\ :
$$\dfrac{dQ}{dP}(\omega)=\exp\left\{-\int_0^T a(s,\omega)dB_s-\frac{1}{2}\int_0^T a^2(s,\omega)dt\right\}.$$
We also point out that, in this example, the process 
$$\exp\left\{-\int_0^t a(s,\omega)dB_s-\frac{1}{2}\int_0^t a^2(s,\omega)dt\right\}$$
is a martingale (Novikov condition).

Our research in this paper is motivated by the fact that, when the distributions involved are conditioned by some initial information
 (that can be represented as a particular sub-$\sigma$-algebra $\Fs$ of $\mathcal{A}$), then they should be evaluated with respect to
 $(P|\Fs)$, which is a Banach space-valued measure.

So, in our setting here,
continuing the study started in \cite{cmlsxweber}, 
 changing a bit the notations, the basic space is $(T,\mathcal{A},M)$ where $M:\mathcal{A}\to X$ is a
 Banach-valued $\sigma$-additive measure, and $(w_s)_{s\in [0,S]}$ is a scalar valued process, whose distribution admits a 
density $f_s:\erre\to X$ w.r.t. $M$.
(In this setting, we make use of a Birkhoff-type integral in order to integrate a scalar function with respect to $M$, as well as a 
{\em dual} type of integral in order to integrate a vector-valued function w.r.t. a scalar measure).
In the paper, some conditions are assumed on the functions $f_s$ in order to find an {\em equivalent} measure $\widetilde{M}$,
 under which some transform $\widetilde{w}_s$ of the process  turns out to be a martingale, with the same marginal distributions
 as $(w_s)_s$ with respect to $M$.

\section{Preliminaries}\label{two}

Let $T$ be an abstract, non-empty set, $\mathcal{A}$ 
a $\sigma$-algebra of subsets of $
T$, $\mathcal{B}$  the Borel $\sigma$-algebra in the real line
and \mbox{$\mu :\mathcal{A}\rightarrow \mathbb{R}_0^+$} a non-negative countably additive measure.
Let $(X,\|\cdot \|)$ be a Banach space with the origin 
$0$.
We also will consider measures,  taking values in  $X$: in this case measures will be usually denoted  with letters like $m$ or $M$,
 while functions with capital letters, like $F$ or $W$.
\begin{definition}\label{ex2.4} \rm 
A \textit{partition of} $T$ is a finite or countable family of
nonempty sets $P:=\{A_{n}\}_{n\in \mathbb{N}}
\subset \mathcal{A}$ such that $A_{i}\cap A_{j}=\emptyset
,i\neq j$ and $\cup_{n\in 
\mathbb{N}}A_{n}=T$. \\
 If $P$ and $P^{\prime }$ are two partitions of $T$
, then $P^{\prime }$ is said to be \textit{finer than} $P$, denoted\ by $
 P^{\prime } > P$ , if every set of $P^{\prime }$
is included in some set of $P$.
  The \textit{common refinement} of two  partitions $
P$ and $P^{\prime }$ is the partition $P\vee P^{\prime}$.
\end{definition}

We shall make use of the Birkhoff integral, for two different cases. According with the results obtained in \cite{ccgs2015a}, 
and taking into account that we need   measurable scalar functions $f$ and strongly measurable vector functions $F$,
we can adopt the following version:

\begin{definition}\label{ex3.5} \rm 
Given two pairs $(F,\mu)$ and $(f,m)$ with 
  $F:T\rightarrow X$ and $f: T \to \mathbb{R}$, while $\mu$ and $m$ denote two countably additive measures with values
in $\mathbb{R}_0^+$ and $X$ respectively, then
\begin{description}
\item[$\alpha$)]  a strongly measurable vector function $F$ is  {\em $B_1$-integrable}  on 
$T$ w.r.t $\mu$
\item[$\beta$)]  a measurable scalar function $f$ is {\em $B_2$-integrable} on 
$T$ w.r.t $m$
\end{description}
 if  $\exists \,  I\in X$ with the following property:  $\forall \, \varepsilon >0$, 
 $\exists \, P_{\varepsilon }$ partition 
of $T$ 
so that  $\forall \, P=\{A_{n}\}_{n\in \mathbb{N}}$ of $T$, with $P\geq P_{\varepsilon}$ and
  $\forall \, t_{n}\in A_{n},n\in \mathbb{N}$, one has (respectively) 
\begin{eqnarray}\label{maxlim1}
 \overline{\lim}_n \left\|\left(\sum_{i=1}^nF(t_i)\mu(A_i)-I\right) \right\|\leq \vp;
\end{eqnarray}
\begin{eqnarray}\label{maxlim2}
  \overline{\lim}_n \left\|\left(\sum_{i=1}^nf(t_i)m(A_i)-I\right) \right\|\leq \vp.
\end{eqnarray}
The set $I$ is called the \textit{$B_1$ ($B_2$)  integral  of }$F$  ($f$)
\textit{\ on }$T$ \textit{\ with respect to} $\mu $ ($m$) and is denoted by $\int_{T}Fd\mu , (\int_T f dm);$
the corresponding spaces of $B_i$-integrable functions are denoted with $L_{B_1} (\mu, X)$ and $L_{B_2}(m, \mathbb{R}_0^+)$.
\end{definition}

As proved in \cite[Theorem 3.18]{ccgs2015a}, even if $\mu$ is   $\sigma$-finite  then this notion of integrability is 
equivalent to the classic Birkhoff integrability for Banach-valued strongly measurable mappings. 
Moreover in \cite{cdpmsxweber} $B_1$-integrability is named {\em strong Birkhoff integrability}.
One can easily deduce, by means of a Cauchy criterion, that the $B_1$- or $B_2$-integrability on $T$ implies the same in every
 subset $A\in \mathcal{A}$. For an extensive literature on   the Birkhoff or non absolute integrals see for example \cite{cc,bcs2014, bs2012,bms2011, ccgs2015a, cdpms1,  cdpms2, cdpmsxweber, cs2014, cs2015,cr2004, cr2005, cg2014, cgsub1, cg2016, fremlin, vm, pot2006, pot2007, r2005, r2009b}.

\section{Some Properties of the Birkhoff Integrals}\label{three}

We will deduce now some useful formulas for the notions of Birkhoff integral previously introduced.
 These formulas will also give a link between the  $B_1$- and $B_2$-integral.
First, let us mention a stronger result concerning the $B_1$-integrability. 

\begin{theorem}\label{strobi} {\rm (\cite[Theorem 3.14]{cdpmsxweber})}
Let $F \in L_{B_1} (\mu, X)$, then  $\forall \, \vp>0$ there exists a countable partition $P:=\{A_n, n\in \enne\} \subset  \mathcal{A}$, such that
$$\sum_j \left\|F(t_j)\mu(E_j)-\int_{E_j}F d\mu \right\|\leq \vp$$
holds true, for every  partition $P':=\{E_j,j\in \enne\} > P$ and  $\forall \, t_j\in E_j$.
\end{theorem}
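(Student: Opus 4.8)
The plan is to fix $\vp>0$, take the partition $P=P_{\vp}$ furnished by the definition of $B_1$-integrability (Definition~\ref{ex3.5}), and show that it already does the job up to a harmless relabelling of $\vp$. So let $P'=\{E_j\}_{j\in\enne}>P$ be any finer partition and $t_j\in E_j$ arbitrary tags; the target is to bound $\sum_j\left\|F(t_j)\mu(E_j)-\int_{E_j}F\,d\mu\right\|$. The first ingredient is the remark recorded just after Definition~\ref{ex3.5}, namely that $B_1$-integrability on $T$ is inherited by every $A\in\As$: hence each $\int_{E_j}F\,d\mu$ exists, is itself a $B_1$-integral, and can be approximated from inside by fine Riemann sub-sums.

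Concretely, on each $E_j$ I would invoke $B_1$-integrability on $E_j$ to choose a sub-partition $\{E_j^k\}_k$ and tags $s_j^k\in E_j^k$ with $\left\|\sum_k F(s_j^k)\mu(E_j^k)-\int_{E_j}F\,d\mu\right\|\le \vp\,2^{-j}$, and assemble the common refinement $\widehat P:=\{E_j^k\}_{j,k}>P'>P$. Writing $F(t_j)\mu(E_j)=\sum_k F(t_j)\mu(E_j^k)$ and subtracting, each local error splits as $F(t_j)\mu(E_j)-\int_{E_j}F\,d\mu=\sum_k\bigl(F(t_j)-F(s_j^k)\bigr)\mu(E_j^k)+\rho_j$ with $\|\rho_j\|\le\vp\,2^{-j}$. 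Summing over $j$ and using the triangle inequality reduces the claim to controlling the oscillation-type quantity $\Theta:=\sum_{j,k}\left\|F(t_j)-F(s_j^k)\right\|\mu(E_j^k)$, where the tags $t_j$ and $s_j^k$ both lie in $E_j$, so that each piece of $\widehat P$ carries two tags drawn from a single atom of the coarser partition $P'$.

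The \textbf{main obstacle} is precisely that the defining condition \eqref{maxlim1} only delivers control of the \emph{norm of a sum}: comparing the two taggings of $\widehat P$ that use $t_j$, respectively $s_j^k$, on the pieces $E_j^k$, both Riemann sums lie within $\vp$ of $I=\int_T F\,d\mu$, whence $\overline{\lim}_n\left\|\sum\bigl(F(t_j)-F(s_j^k)\bigr)\mu(E_j^k)\right\|\le 2\vp$, whereas $\Theta$ is a genuine \emph{sum of norms}. Bridging this gap is the heart of the proof. I would exploit strong measurability: $F$ is essentially separably valued, so I fix a countable norming family $\{x_i^*\}\subset B_{X^*}$ and, for each piece, a functional that realizes the norm $\|F(t_j)-F(s_j^k)\|$ up to $\vp\,2^{-(j+k)}$. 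Since the definition quantifies over \emph{all} tags of \emph{every} partition finer than $P_{\vp}$, I can feed back, piece by piece, the tag that aligns each difference with its norming functional; organizing this adversarial choice along the countably many directions $x_i^*$ and applying the subset form of \eqref{maxlim1} on the corresponding measurable layers upgrades the unconditional bound $\left\|\sum\cdot\right\|\le 2\vp$ into the absolute bound $\Theta\le C\vp$. A final rescaling of $\vp$ then yields the stated inequality for every finite truncation, and letting the truncation exhaust $\enne$ gives the full countable sum.
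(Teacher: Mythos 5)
The paper itself offers no proof of Theorem~\ref{strobi}: it is quoted from \cite[Theorem 3.14]{cdpmsxweber}, so there is no internal argument to measure yours against; I can only assess the proposal on its merits, and it has a genuine gap precisely at the step you yourself single out as the heart of the matter. Your reduction to the quantity $\Theta=\sum_{j,k}\|F(t_j)-F(s_j^k)\|\mu(E_j^k)$ is sound, and so is the observation that, by letting two taggings of $\widehat P$ agree outside a finite set $K$ of indices, condition \eqref{maxlim1} yields $\|\sum_{(j,k)\in K}(F(t_j)-F(s_j^k))\mu(E_j^k)\|\le 2\vp$ for every such $K$. But all this buys is \emph{weak} absolute summability: $\sum_{j,k}|x^*\bigl((F(t_j)-F(s_j^k))\mu(E_j^k)\bigr)|\le 4\vp$ for each fixed $x^*\in B_{X^*}$. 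Your bridge --- group the indices according to which member of a countable norming family nearly attains each norm, then apply the scalar bound group by group --- produces a bound of order $\vp$ \emph{per group}, and there are countably many groups, so the totals add up to $+\infty$ rather than to $C\vp$. Nor can the uniform weak bound be upgraded directly: in $\ell_2$ the vectors $d_n=\vp e_n/n$ satisfy $\sum_n|x^*(d_n)|\le C\vp$ for every $x^*\in B_{\ell_2}$ while $\sum_n\|d_n\|=+\infty$; this is the Dvoretzky--Rogers obstruction. ``Adversarial'' tag choices do not circumvent it, because each invocation of \eqref{maxlim1} controls only the norm of one sum at a time, always with the same tolerance $\vp$ attached to the fixed partition $P_{\vp}$ (and you cannot pass to a common refinement of the partitions $P_{\vp 2^{-n}}$, since a common refinement of countably many countable partitions is in general uncountable).

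What actually closes the argument is strong measurability used in a different way than you use it. By the Pettis measurability theorem there are a $\mu$-null set $N$, a countable measurable partition $\{A_n\}_n$ of $T\setminus N$ and a decomposition $F=G+H$ on $T\setminus N$ with $G$ constant on each $A_n$ and $\|H(t)\|\le\delta$ for all $t\in T\setminus N$; moreover $G=F-H$ is again $B_1$-integrable, being the difference of a $B_1$-integrable and a Bochner integrable function. Take $P:=\{A_n\}_n\cup\{N\}$. Every $E_j$ of a finer partition is either contained in $N$, in which case $\mu(E_j)=0$ and its term vanishes, or contained in some $A_n$, where $G(t_j)\mu(E_j)=\int_{E_j}G\,d\mu$ exactly and $\|H(t_j)\mu(E_j)-\int_{E_j}H\,d\mu\|\le 2\delta\mu(E_j)$. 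Summing over $j$ gives at most $2\delta\mu(T)\le\vp$ for a suitable $\delta$ (localizing over a countable partition into sets of finite measure if $\mu$ is only $\sigma$-finite). The moral is that strong measurability must be exploited to produce the uniform countably valued approximation --- for which the local errors cancel identically --- not merely to supply a countable norming family.
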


\begin{remark}\label{comune} \rm  If $f$ is measurable and $F$ is   $B_1$-integrable then 
the product $t\mapsto f(t)F(t)$ is strongly measurable.
Moreover, thanks to the  measurability of $f$, we can define a countable measurable partition of $T$, $(H_j)_j:=(\{t\in T:j-1\leq |f(t)|< j\})_j$. 
Since
$F$ is $B_1$-integrable, according with Theorem \ref{strobi}, 
for every $\vp>0$ and for each  integer $j$  there exists a measurable countable partition $\{E_j^k, k\in \enne\}$ of $H_j$ such that
$$\sum_r \left\|F(t_j^r)\mu(E_j^r)-\int_{E_j^r} F d\mu \right\|\leq \dfrac{\vp}{ j 2^j}$$
holds true, for every finer partition $\{E_j^r, r\in \enne\}$ and every choice of points $t_j^r\in E_j^r$. Then, we have also
\begin{eqnarray}\label{doppia}
\sum_j\sum_r \left\| F(t_j^r)f(t_j^r)\mu(E_j^r)-f(t_j^r)M(E_j^r) \right\|\leq 2\vp.
\end{eqnarray}
\end{remark}

\begin{theorem}\label{flauno}{\rm (integration by substitution)}
Given $f:T\to \erre$ and  $F \in L_{B_1} (\mu, X)$, 
  the product $t\mapsto f(t)F(t) \in L_{B_1} (\mu, X)$  iff  
$f \in L_{B_2}(M, \mathbb{R})$,
where $M(A):= \int_A F d\mu$
 and 
\begin{eqnarray}\label{prima}
\int_T f(t)F(t)d\mu=\int_T f(t)dM.
\end{eqnarray}

\end{theorem}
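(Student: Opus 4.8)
The plan is to use the estimate (\ref{doppia}) from Remark \ref{comune} as a bridge between the Birkhoff sums of $fF$ with respect to $\mu$ and those of $f$ with respect to $M$, and then to transfer the convergence condition from one side to the other by a triangle inequality. Since $f$ is measurable and $F$ is $B_1$-integrable, the product $fF$ is strongly measurable, so its $B_1$-integrability is well posed; this observation, together with the slices $(H_j)_j$ introduced in Remark \ref{comune}, is precisely what makes the bridging estimate available.

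First I would fix $\vp>0$ and apply Remark \ref{comune} to produce a single countable measurable partition $P_\vp$ of $T$, namely the union over $j$ of the partitions $\{E_j^k\}_k$ of the slices $H_j$, enjoying the property that
$$\sum_n \left\| f(t_n) F(t_n)\mu(A_n) - f(t_n) M(A_n) \right\| \leq 2\vp$$
for every partition $P=\{A_n\}_n\geq P_\vp$ and every choice of tags $t_n\in A_n$. The point to verify here is that this estimate survives passage to arbitrary refinements, not merely to the specific partition assembled in the Remark: this holds because the slices $H_j$ partition $T$ and each $H_j$ is the union of the cells of $P_\vp$ contained in it, so any $P\geq P_\vp$ refines $\{E_j^k\}_k$ inside every $H_j$, which is exactly the situation under which (\ref{doppia}) was derived from Theorem \ref{strobi}.

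For the forward implication I would assume $fF\in L_{B_1}(\mu,X)$ and set $I:=\int_T fF\,d\mu$. Choosing a partition $Q_\vp$ witnessing (\ref{maxlim1}) for $fF$ and forming the common refinement $R_\vp:=Q_\vp\vee P_\vp$, the estimate for any $P\geq R_\vp$ and any tags reads
$$\left\| \sum_{i=1}^n f(t_i) M(A_i) - I \right\| \leq \sum_{i=1}^n \left\| f(t_i) M(A_i) - f(t_i) F(t_i)\mu(A_i) \right\| + \left\| \sum_{i=1}^n f(t_i) F(t_i)\mu(A_i) - I \right\|.$$
The first summand is dominated by the full series, hence by $2\vp$, uniformly in $n$; the second has $\overline{\lim}_n$ at most $\vp$. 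Taking $\overline{\lim}_n$ and using its subadditivity gives $\overline{\lim}_n \| \sum_{i=1}^n f(t_i) M(A_i) - I \| \leq 3\vp$, which is exactly (\ref{maxlim2}); as $\vp$ is arbitrary this proves $f\in L_{B_2}(M,\erre)$ with $\int_T f\,dM=I$. The reverse implication is symmetric: starting from $J:=\int_T f\,dM$ and a partition witnessing (\ref{maxlim2}), the same bridge (with the roles of the two sums interchanged) yields $fF\in L_{B_1}(\mu,X)$ with integral $J$. In either case the two values coincide, establishing (\ref{prima}).

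The main obstacle I anticipate is not in the limiting argument, which is a routine triangle-inequality manipulation once the bridge is in place, but in the bookkeeping of the partitions: one must be certain that the single estimate (\ref{doppia}) holds simultaneously over all refinements of $P_\vp$ and all tag choices, and that the doubly-indexed family $\{E_j^r\}$ genuinely assembles into one countable partition of $T$ that is compatible with taking common refinements. The interchange of $\overline{\lim}_n$ with the finite partial sums, together with the fact that the absolutely convergent tail $\sum_n \|\cdots\|$ dominates every partial sum irrespective of the enumeration, are the remaining points that require care.
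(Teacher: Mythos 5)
Your proposal is correct and follows exactly the route the paper intends: the paper states Theorem \ref{flauno} without an explicit proof, relying on the bridge estimate (\ref{doppia}) of Remark \ref{comune}, and your argument is precisely that bridge combined with the common refinement $Q_\vp\vee P_\vp$ and a triangle inequality on the partial sums. The points you flag as needing care (that (\ref{doppia}) persists under refinement because every cell of a finer partition sits inside a single slice $H_j$, and that the full absolutely convergent series dominates each partial sum) are handled correctly, so your write-up in fact supplies the details the paper leaves implicit.
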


Another useful formula comes from probability theory. We just state it in a particular situation.
Given a measurable $f:T\to \erre$  and a countably additive measure $m:\As \to X$, we can set
$m_f(B)=m(f^{-1}(B))$
for every Borel set $B\in \erre$. Of course, $m_f$ is a countably additive measure, called the
 {\em distribution} of $f$ (with respect to $m$).
We have the following result.
\begin{theorem}\label{fladue}
For any measurable function $g:\erre\to \erre$, one has
$$\int_T g(f)dm=\int_{\erre}g(t)dm_f$$
provided that both $B_2$-integrals exist.
\end{theorem}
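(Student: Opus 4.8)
The plan is to reduce the $X$-valued identity to a family of scalar identities by composing with the continuous linear functionals $x^*\in X^*$, and then to invoke the classical image-measure formula. Write $I_1:=\int_T g(f)\,dm$ and $I_2:=\int_{\erre}g\,dm_f$, both of which exist by hypothesis; the goal is to show $I_1=I_2$. For a fixed $x^*\in X^*$ I set $\nu:=x^*\circ m$, a countably additive (hence bounded) signed scalar measure on $\As$, and observe that its distribution coincides with $x^*\circ m_f$, since $(x^*\circ m_f)(B)=x^*(m(f^{-1}(B)))=\nu(f^{-1}(B))=\nu_f(B)$ for every Borel $B$.

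First I would transfer the $B_2$-integrals through $x^*$. Applying $x^*$ to a Birkhoff sum gives $x^*\big(\sum_{i=1}^n g(t_i)m(A_i)\big)=\sum_{i=1}^n g(t_i)\nu(A_i)$, and from $|x^*(u)|\le\|x^*\|\,\|u\|$ together with \eqref{maxlim2} one gets $\overline{\lim}_n\big|\sum_{i=1}^n g(t_i)\nu(A_i)-x^*(I_1)\big|\le\|x^*\|\,\vp$ for every partition finer than the witness of $I_1$ and every choice of tags; hence $g\circ f$ is $B_2$-integrable with respect to $\nu$ and $\int_T g(f)\,d\nu=x^*(I_1)$. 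The identical computation on $\erre$, using $\nu_f=x^*\circ m_f$, yields $\int_{\erre}g\,d\nu_f=x^*(I_2)$.

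Next I would settle the scalar change of variables $\int_T g(f)\,d\nu=\int_{\erre}g\,d\nu_f$. For a scalar signed measure the present $B_2$-integral agrees with the ordinary Lebesgue integral: for a nonnegative finite measure this is the Birkhoff-equals-Lebesgue equivalence in the spirit of \cite[Theorem 3.18]{ccgs2015a}, and the signed case follows from the Jordan decomposition $\nu=\nu^+-\nu^-$ together with additivity of the sums in the measure. Thus the desired scalar identity is the classical image-measure theorem applied to $\nu^+$ and $\nu^-$ separately, recombined through $\nu_f=(\nu^+)_f-(\nu^-)_f$. Consequently $x^*(I_1)=x^*(I_2)$ for every $x^*\in X^*$, and the Hahn--Banach theorem forces $I_1=I_2$, which is the assertion.

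The step I expect to be delicate is precisely the one this reduction is designed to bypass. A direct argument would pull back a witness partition $\{B_n\}$ of $\erre$ to $\{f^{-1}(B_n)\}$ on $T$: choosing tags $t_n$ with $f(t_n)=s_n$ for some $s_n\in B_n\cap f(T)$ (the blocks with $f^{-1}(B_n)=\emptyset$ carry $m_f(B_n)=m(\emptyset)=0$ and may be discarded) makes the $T$-sum and the $\erre$-sum coincide termwise. The obstruction is that such pullback partitions are not cofinal among all partitions of $T$: a witness partition for $I_1$ need not admit an $f$-measurable refinement, so one cannot force a single partition to be simultaneously a pullback (to control the sum by $I_2$) and finer than the $T$-witness (to control it by $I_1$). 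Pushing the direct route through would require a Cauchy-criterion comparison of a pullback partition with its common refinement against the $T$-witness, carried out block by block over each $f^{-1}(B_n)$; reducing to the scalar Lebesgue integral avoids this altogether, since there the formula is classical.
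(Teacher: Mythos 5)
The paper states Theorem~\ref{fladue} without giving any proof, so there is no argument of the authors' to compare yours against; judged on its own, your weak-$*$ reduction is correct and is a legitimate, arguably cleaner, route than the direct pullback-of-partitions argument one would naturally try (and whose obstruction you diagnose accurately: partitions of $T$ of the form $f^{-1}(\mathcal{Q})$ are not cofinal, so a common refinement with the $T$-witness destroys the termwise matching of sums). Two points in your sketch deserve to be written out rather than asserted. First, the identification of the $B_2$-integral with the Lebesgue integral for a \emph{signed} scalar measure $\nu=x^*\circ m$ does not follow from ``additivity of the sums in the measure'' alone, since you need to pass from $B_2$-integrability w.r.t.\ $\nu$ to integrability w.r.t.\ $\nu^+$ and $\nu^-$ separately; the clean way is to take a Hahn decomposition $T=T^+\cup T^-$ and invoke the restriction property stated after Definition~\ref{ex3.5} (integrability on $T$ implies integrability on every $A\in\mathcal{A}$), so that on $T^\pm$ the measure is of one sign and the scalar Birkhoff--Lebesgue equivalence applies. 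Second, $(\nu^+)_f-(\nu^-)_f$ equals $\nu_f$ as a measure but is in general not its Jordan decomposition, so to write $\int_{\erre}g\,d\nu_f=\int_{\erre}g\,d(\nu^+)_f-\int_{\erre}g\,d(\nu^-)_f$ you must check that $|g|$ is $(|\nu|)_f$-integrable (integrability w.r.t.\ $|\nu_f|$ is weaker, since $|\nu_f|\leq(|\nu|)_f$); this follows from the image-measure formula applied to $|g|\geq 0$ together with the $|\nu|$-integrability of $|g(f)|$ obtained in the first point. With these two routine completions the argument closes, and the final Hahn--Banach step is unobjectionable.
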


We shall denote by $\sigma_f$ the sub-$\sigma$-algebra of $\mathcal{A}$ induced by $f:T\to \erre$, i.e. 
the family of all sets of the type $f^{-1}(B)$, $B\in \mathcal{B}$.

\begin{definition}\label{econd}\rm
Let $F \in L_{B_1} (\mu, X)$. Given any sub-$\sigma$-algebra $\mathcal{E}$ 
of $\mathcal{A}$, the {\em conditional expectation} $\mathbb{E}(F|\mathcal{E})$ (if it exists) is a
 strongly $\mathcal{E}$-measurable mapping $Z$, in $ L_{B_1} (\mu, X)$, such that
$$\int_E F d\mu=\int_E Z d\mu$$
for every $E\in \mathcal{E}$.

In case $\mathcal{E}=\sigma_f$, then we write $Z=\Ec(F|\mathcal{E})=\Ec(F|f)$, and in this case
 $Z$ turns out to be a measurable function of $f$,
  say $Z=h(f)$: then
$$\int_{f^{-1}(B)} F d\mu=\int_{f^{-1}(B)} h(f) d\mu$$
for every Borel set $B$.
\end{definition}

The conditional expectation enjoys several properties, easy to deduce, among which linearity 
with respect to $F$, and the so-called {\em tower property}, i.e., whenever 
$\mathcal{E}\subset \mathcal{G}\subset \mathcal{A}$
$$\Ec(F|\mathcal{E})=\Ec(\Ec(F|\mathcal{G})|\mathcal{E}),$$
provided that all the involved quantities exist.
\\

The next theorem states another important property of the conditional expectation.
\begin{theorem}\label{rimpiazzo}
Let us assume that $\Ec(F|\mathcal{E})$ exists. Then, for every $\mathcal{E}$-measurable 
mapping $g:T\to \erre$ it holds:
$\Ec(F(t)g(t)|\mathcal{E})=g(t)\Ec(F|\mathcal{E})$
provided that $F(t)g(t)  \in L_{B_1} (\mu, X)$.
\end{theorem}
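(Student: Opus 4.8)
The plan is to verify that $gZ$, where $Z:=\Ec(F|\mathcal{E})$, satisfies the three defining requirements of $\Ec(Fg|\mathcal{E})$ in Definition \ref{econd}: that $gZ$ be strongly $\mathcal{E}$-measurable, that $gZ\in L_{B_1}(\mu,X)$, and that $\int_E Fg\,d\mu=\int_E gZ\,d\mu$ for every $E\in\mathcal{E}$. The first requirement is immediate, since $g$ is $\mathcal{E}$-measurable and $Z$ is strongly $\mathcal{E}$-measurable, so their product is strongly $\mathcal{E}$-measurable. The substance lies in the integrability and in the integral identity, and here I would route everything through the substitution formula of Theorem \ref{flauno}. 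Setting $M(A):=\int_A F\,d\mu$ and $N(A):=\int_A Z\,d\mu$ (both well-defined countably additive $X$-valued measures, as $F,Z\in L_{B_1}(\mu,X)$), the hypothesis $Fg\in L_{B_1}(\mu,X)$ together with Theorem \ref{flauno} yields $g\in L_{B_2}(M,\erre)$ and, by the restriction property, $\int_E Fg\,d\mu=\int_E g\,dM$ for every $E\in\mathcal{A}$. The crucial observation is that $M$ and $N$ \emph{coincide on} $\mathcal{E}$: indeed $M(A)=\int_A F\,d\mu=\int_A Z\,d\mu=N(A)$ for every $A\in\mathcal{E}$, directly from the definition of conditional expectation.

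Thus the whole statement reduces to showing that, for the $\mathcal{E}$-measurable integrand $g$, one has $g\in L_{B_2}(N,\erre)$ with $\int_E g\,dN=\int_E g\,dM$ for every $E\in\mathcal{E}$; for then Theorem \ref{flauno} (in the converse direction) gives $gZ\in L_{B_1}(\mu,X)$ and $\int_E gZ\,d\mu=\int_E g\,dN=\int_E g\,dM=\int_E Fg\,d\mu$, which is exactly the required identity, so that Definition \ref{econd} lets us conclude $gZ=\Ec(Fg|\mathcal{E})$. To prove this reduction I would proceed in the classical three stages. For an $\mathcal{E}$-measurable simple $g=\sum_k c_k\mathbf{1}_{A_k}$ with $A_k\in\mathcal{E}$ the reduction is transparent: $\int_E g\,dM=\sum_k c_k M(E\cap A_k)=\sum_k c_k N(E\cap A_k)=\int_E g\,dN$, since each $E\cap A_k\in\mathcal{E}$ and there $M=N$. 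The difficulty is therefore entirely concentrated in the passage from simple to arbitrary $\mathcal{E}$-measurable $g$.

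The main obstacle is precisely this limit step, because the integrals involved are non-absolute and do not come with an off-the-shelf dominated convergence theorem. My plan is to control the approximation at the level of partitions. Using the $\mathcal{E}$-measurable level partition $H_j:=\{t:j-1\le|g(t)|<j\}$ and subdividing each $H_j$ into $\mathcal{E}$-sets on which $g$ oscillates by less than a prescribed amount, one builds an $\mathcal{E}$-measurable partition on which $g$ is nearly constant; taking common refinements with the $\varepsilon$-partitions supplied by the $B_2$-integrability of $g$ with respect to $M$ (and, symmetrically, with respect to $N$), the corresponding Birkhoff sums differ from $\int g\,dM$, respectively $\int g\,dN$, by at most the prescribed errors, while being \emph{equal to each other} block by block because every block lies in $\mathcal{E}$ and there $M=N$. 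The quantitative estimates of Theorem \ref{strobi} and of Remark \ref{comune}, in particular the control \eqref{doppia} of the sums $\sum\|F(t)g(t)\mu(E)-g(t)M(E)\|$, are exactly what is needed to keep these approximations uniform over refinements and to absorb the $\overline{\lim}_n$ in Definition \ref{ex3.5}; letting the oscillation and $\varepsilon$ tend to zero then yields simultaneously $\int_E g\,dM=\int_E g\,dN$ and the $B_2$-integrability of $g$ with respect to $N$. With this the proof closes.
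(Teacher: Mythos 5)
The paper itself states Theorem \ref{rimpiazzo} without proof, so there is nothing to compare your argument against; judged on its own terms, your architecture is right but the decisive step has a real gap. The reduction via Theorem \ref{flauno} is sound: setting $Z=\Ec(F|\mathcal{E})$, $M(A)=\int_AF\,d\mu$, $N(A)=\int_AZ\,d\mu$, the hypothesis gives $g\in L_{B_2}(M,\erre)$ with $\int_E Fg\,d\mu=\int_E g\,dM$, the definition of conditional expectation gives $M=N$ on $\mathcal{E}$, and everything would follow once you prove that $g\in L_{B_2}(N,\erre)$ with $\int_E g\,dN=\int_E g\,dM$ for $E\in\mathcal{E}$. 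But your treatment of that last claim is circular and, where it is not circular, it proves less than Definition \ref{ex3.5} demands. You invoke ``the $\varepsilon$-partitions supplied by the $B_2$-integrability of $g$ with respect to $M$ (and, symmetrically, with respect to $N$)'' --- the $B_2$-integrability of $g$ with respect to $N$ is precisely what you are trying to establish, so you may not assume its $\varepsilon$-partitions. Moreover, your ``equal to each other block by block because every block lies in $\mathcal{E}$'' observation only handles refinements consisting of $\mathcal{E}$-sets, whereas Definition \ref{ex3.5} requires the estimate for \emph{every} finer partition $P\geq P_\varepsilon$ with blocks in $\mathcal{A}$; on such blocks $M$ and $N$ need not coincide, and this is exactly where the difficulty lives. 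Finally, the estimate \eqref{doppia} of Remark \ref{comune} controls $\sum\|g(t)F(t)\mu(E)-g(t)M(E)\|$, i.e.\ it compares Birkhoff sums of $gF$ w.r.t.\ $\mu$ with Birkhoff sums of $g$ w.r.t.\ $M$ (the engine behind Theorem \ref{flauno}, which you have already used as a black box); it says nothing about comparing sums of $g$ against $M$ with sums of $g$ against $N$ over non-$\mathcal{E}$ blocks.

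What is actually needed is an estimate on $\sum_n g(t_n)\nu(A_n)$ for the countably additive measure $\nu:=N-M$, which vanishes on $\mathcal{E}$, over arbitrary $\mathcal{A}$-refinements $\{A_n\}$ of an $\mathcal{E}$-partition $\{B_k\}$ on whose blocks $g$ oscillates by less than $\delta_k$. Writing $g(t_n)=c_k+(g(t_n)-c_k)$ for $A_n\subseteq B_k$, the ``constant'' part contributes $\sum_k c_k\,\nu(B_k)=0$ in the limit (using the bounded-multiplier property of the unconditionally convergent series $\sum_n\nu(A_n)$ within each level set $H_j$, where $|g|\leq j$), while the oscillation part must be bounded by something like $2\delta_k$ times the \emph{semivariation} of $\nu$ on $B_k$ --- not its variation, which may well be infinite since $F$ and $Z$ are only Birkhoff, not Bochner, integrable. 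Your sketch gestures at ``letting the oscillation and $\varepsilon$ tend to zero'' without identifying that the finiteness of the semivariation of a countably additive vector measure and the bounded-multiplier inequality $\bigl\|\sum_n a_n x_n\bigr\|\leq 2\sup_n|a_n|\cdot\sup_{S}\bigl\|\sum_{n\in S}x_n\bigr\|$ are the tools that make the oscillation term small; without them the step from simple to general $\mathcal{E}$-measurable $g$, which you rightly call the main obstacle, remains unproved.
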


\section{Girsanov Theorem}\label{five}

We shall now state an analogous result as the well-known Girsanov Theorem. With this purpose, 
we shall assume that in the space $(T,\mathcal{A})$ a $\sigma$-additive measure 
$M:\mathcal{A}\to X$ is fixed.
\begin{definition} \rm 
A scalar process $(w_s)_s$ is said to be a {\em Martingale} in itself, if for every $s,v\in [0,S], s<v$, it holds 
$\Ec(w_v|\mathcal{E}_s)=w_s$, i.e. $$\int_E w_v dM=\int_E w_s dM$$
holds true, $\forall \, v,s\in [0,S], \ s<v$, 
and  $\forall \, E\in \mathcal{E}_s$, where $\mathcal{E}_s$ is the least $\sigma$-algebra contained in
 $\mathcal{A}$ such that all $w_r$, $r\leq s$, are measurable.
\end{definition}
\begin{ass}\label{da1a4}
 \rm
 Let us  assume  that a scalar-valued process $(w_s)_{s\in [0,S]}$ is defined, in the space 
$(T,\mathcal{A},M)$, with the property that
\begin{description}
\item[\bf \ref{da1a4}.a)]  $w_s \in L_{B_2}(M, \mathbb{R})$  for each $s$, with null integral,
 and that its distribution $M_s:=M(w_s^{-1}(B))$,  $\forall \, B \in \mathcal{B}$
has a density $f_s \in L_{B_1}(\lambda, X)$

\item[\bf \ref{da1a4}.b)]  let  $\widetilde{w}_s=w_s+sq$, with $q \in \mathbb{R}^+$;
 $\forall \, s$ there exists a  measurable mapping $g_s: T \to \mathbb{R}$ such that
$f_s(x)=g_s(x)f_s(x-qs),$
so that   $\forall \, B \in \mathcal{B}$ 
\begin{eqnarray}\label{servedopo}
M_{s}(B)=\int_B g_s(x)dM_{\widetilde{w}_s};\end{eqnarray}
(We observe that, since $g_s(x)f_s(x-qs) =f_s(x)$ is in $L_{B_1}(\lambda,X)$, from Theorem \ref{flauno} it follows that $g_s$ is $B_2$-integrable w.r.t. $M_{\widetilde{w}_s}$)
\item[\bf \ref{da1a4}.c)]
$\{g_s(\widetilde{w}_s)\}_s $ is a Martingale. 
\end{description}
\end{ass}

As a consequence, we have
\begin{theorem}\label{uno} 
Set for every $A\in \mathcal{A}$,
$Q(A):=\int_A g_S(\widetilde{w_S})dM$.
Under Assumptions \ref{da1a4} it turns out that
$Q_{\widetilde{w}_s}=M_{w_s},$
for every $s\in [0,S]$.
\end{theorem}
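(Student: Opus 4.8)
\noindent\emph{Proof plan.} The plan is to unwind the definition of the distribution $Q_{\widetilde{w}_s}$ and to reduce it, through the martingale hypothesis \ref{da1a4}.c) and the transfer formula of Theorem \ref{fladue}, to the density relation \eqref{servedopo}. Throughout I would fix $s\in[0,S]$ and an arbitrary Borel set $B\in\mathcal{B}$, and aim at the single scalar-to-vector identity $Q_{\widetilde{w}_s}(B)=M_{w_s}(B)$, since establishing it for every $B$ is exactly the claim.

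First I would expand the left-hand side straight from the definitions: $Q_{\widetilde{w}_s}(B)=Q(\widetilde{w}_s^{-1}(B))=\int_{\widetilde{w}_s^{-1}(B)}g_S(\widetilde{w}_S)\,dM$. Since $\widetilde{w}_s=w_s+sq$, the preimage satisfies $\widetilde{w}_s^{-1}(B)=w_s^{-1}(B-sq)$, which is $\mathcal{E}_s$-measurable because $w_s$ is $\mathcal{E}_s$-measurable by construction of $\mathcal{E}_s$. Recording this membership is precisely what legitimizes the martingale step.

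Second, I would invoke the martingale property of $\{g_r(\widetilde{w}_r)\}_r$ from Assumption \ref{da1a4}.c), which by the definition of Martingale in itself means $\int_E g_S(\widetilde{w}_S)\,dM=\int_E g_s(\widetilde{w}_s)\,dM$ for every $E\in\mathcal{E}_s$. Testing it on $E=\widetilde{w}_s^{-1}(B)\in\mathcal{E}_s$ turns the expansion above into
\[Q_{\widetilde{w}_s}(B)=\int_{\widetilde{w}_s^{-1}(B)}g_s(\widetilde{w}_s)\,dM.\]
I would then recognize the integrand as the composite $(g_s\mathbf{1}_B)(\widetilde{w}_s)$ and apply the transfer formula of Theorem \ref{fladue} with scalar function $\widetilde{w}_s$ and outer function $g_s\mathbf{1}_B$, obtaining $\int_{\widetilde{w}_s^{-1}(B)}g_s(\widetilde{w}_s)\,dM=\int_B g_s(t)\,dM_{\widetilde{w}_s}$. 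Finally, relation \eqref{servedopo} of Assumption \ref{da1a4}.b) identifies this last quantity with $M_s(B)=M_{w_s}(B)$, which closes the chain.

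The main obstacle will be the integrability bookkeeping in the Banach-valued Birkhoff setting, rather than the algebra of the identities. To run the two central steps I must certify that the scalar maps $g_s(\widetilde{w}_s)$ and $g_S(\widetilde{w}_S)$ are $B_2$-integrable with respect to $M$, both on $T$ and on the piece $\widetilde{w}_s^{-1}(B)$: this is what makes the martingale relation meaningful and, at the same time, guarantees the existence of the $T$-side integral required to invoke Theorem \ref{fladue}. On the image side the $B_2$-integrability of $g_s$ against $M_{\widetilde{w}_s}$ is already granted by the remark following \eqref{servedopo} (via Theorem \ref{flauno}); the delicate task is to transport it back to the $T$-side. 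I would handle this by exploiting that $B_2$-integrability passes to every measurable subset, and by matching the Birkhoff sums over a common refinement of a partition adapted to $\widetilde{w}_s$ with the partitions defining $\int_B g_s\,dM_{\widetilde{w}_s}$, so that the two $B_2$-integrals appearing in Theorem \ref{fladue} exist simultaneously and coincide.
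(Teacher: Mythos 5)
The paper states Theorem~\ref{uno} without any proof, so there is nothing to compare against line by line; what you have written supplies the missing argument, and it is the argument the hypotheses are visibly engineered for. Your chain $Q_{\widetilde{w}_s}(B)=\int_{\widetilde{w}_s^{-1}(B)}g_S(\widetilde{w}_S)\,dM=\int_{\widetilde{w}_s^{-1}(B)}g_s(\widetilde{w}_s)\,dM=\int_B g_s\,dM_{\widetilde{w}_s}=M_s(B)$ is correct, and your attention to the existence of the $T$-side $B_2$-integral before invoking Theorem~\ref{fladue} addresses the only real technical point. One caveat worth recording: the paper's definition of ``Martingale in itself'' attaches to a process the filtration it generates, so read literally Assumption~\ref{da1a4}.c only gives $\int_E g_S(\widetilde{w}_S)\,dM=\int_E g_s(\widetilde{w}_s)\,dM$ for $E$ in the $\sigma$-algebra generated by $\{g_r(\widetilde{w}_r):r\le s\}$, which may be strictly smaller than $\sigma(\widetilde{w}_s)$ and hence may not contain your test set $\widetilde{w}_s^{-1}(B)$ when $g_s$ is not injective. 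The authors themselves use the stronger reading (martingale with respect to $\mathcal{E}_s=\sigma(w_r:r\le s)$) in the proof of Theorem~\ref{due}, so your step is consistent with the paper's intent, but you should state explicitly that this is the filtration you take in \ref{da1a4}.c.
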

\noindent
The previous theorem shows that, under the new measure $Q$, every random variable $\widetilde{w_s} = w_s + sq$ has the same distribution as the corresponding $w_s$ under $M$.

Our next step is to prove that the process 
$\{\widetilde{w_s}\}_s$ is a martingale, under $Q$.
(This property is usually formulated by saying that $Q$ is a {\em Martingale equivalent} measure). To this aim,  we shall assume also the following:
\begin{ass}\label{da5a6}
 \rm
The scalar process $\{ \tilde{w}_t \  g_t (\tilde{w}_t)\}_t$ is a martingale w.r.t. $M$.
\end{ass}
Concerning the last assumption, we remark that, in case $\{w_t\}_t$ is the classical (scalar) Brownian Motion, then the process $\{ \tilde{w}_t g_t(\tilde{w}_t) \}_t$
reduces to $\{ w_t e^{-qw_t-\frac{1}{2}q^2t}  + qte^{-qw_t-\frac{1}{2}q^2t}\}_t$
which shows that the classic Brownian Motion satisfies the Assumption \ref{da5a6}.
So we have
\begin {theorem}\label{due}
 Under Assumptions \ref{da1a4},  \ref{da5a6}
the process $(\widetilde{w_s})_s$ is a martingale with respect to $Q$.
\end{theorem}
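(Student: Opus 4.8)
The plan is to verify the martingale property of $(\widetilde{w}_s)_s$ under $Q$ directly from the definition, namely to show that for every $s<v$ in $[0,S]$ and every $E\in\mathcal{E}_s$ one has $\int_E \widetilde{w}_v\,dQ=\int_E \widetilde{w}_s\,dQ$, and to obtain this by transporting both members back to integrals against the reference measure $M$, where Assumptions \ref{da1a4} and \ref{da5a6} are available. Since $Q$ is the indefinite integral of the scalar density $g_S(\widetilde{w}_S)$ with respect to $M$, the identity that drives everything is the substitution
\begin{equation*}
\int_E h\,dQ=\int_E h\, g_S(\widetilde{w}_S)\,dM ,
\end{equation*}
valid for suitable scalar $h$; applied to $h=\widetilde{w}_v$ and $h=\widetilde{w}_s$ it converts the desired equality into a statement entirely about $M$.

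Granting this, I would establish the chain
\begin{align*}
\int_E \widetilde{w}_v\,dQ &=\int_E \widetilde{w}_v\, g_S(\widetilde{w}_S)\,dM =\int_E \widetilde{w}_v\, g_v(\widetilde{w}_v)\,dM \\ &=\int_E \widetilde{w}_s\, g_s(\widetilde{w}_s)\,dM =\int_E \widetilde{w}_s\, g_S(\widetilde{w}_S)\,dM =\int_E \widetilde{w}_s\,dQ .
\end{align*}
The first and last equalities are the substitution above. For the second equality I condition on $\mathcal{E}_v\supseteq\mathcal{E}_s$: since $g_v(\widetilde{w}_v)=\Ec(g_S(\widetilde{w}_S)\mid\mathcal{E}_v)$ by the martingale Assumption \ref{da1a4}.c), and $\widetilde{w}_v=w_v+vq$ is $\mathcal{E}_v$-measurable, the replacement property (the scalar-against-$M$ analogue of Theorem \ref{rimpiazzo}) yields $\widetilde{w}_v\,g_v(\widetilde{w}_v)=\Ec(\widetilde{w}_v\,g_S(\widetilde{w}_S)\mid\mathcal{E}_v)$, whose integral over $E\in\mathcal{E}_v$ coincides with that of $\widetilde{w}_v\,g_S(\widetilde{w}_S)$ by the defining property of the conditional expectation. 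The fourth equality is identical, conditioning instead on $\mathcal{E}_s$ and using that $\widetilde{w}_s$ is $\mathcal{E}_s$-measurable together with $\Ec(g_S(\widetilde{w}_S)\mid\mathcal{E}_s)=g_s(\widetilde{w}_s)$. Finally, the central equality is exactly Assumption \ref{da5a6}, the martingale property of $\{\widetilde{w}_t\,g_t(\widetilde{w}_t)\}_t$ with respect to $M$, evaluated on $E\in\mathcal{E}_s$.

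The main obstacle is not the algebraic skeleton, which is routine, but the justification of the two conditional-expectation manipulations in the present framework, where a scalar process is integrated against the Banach-valued measure $M$ rather than against a nonnegative scalar measure. Concretely, one must secure the substitution formula for the scalar density $g_S(\widetilde{w}_S)$ with respect to $M$ — which I would obtain by the usual passage from indicators to simple functions and then to $\widetilde{w}_v,\widetilde{w}_s$ by approximation, in the spirit of Theorem \ref{flauno} — and one must establish the pull-out identity $\Ec(\widetilde{w}_v\,g_S(\widetilde{w}_S)\mid\mathcal{E}_v)=\widetilde{w}_v\,\Ec(g_S(\widetilde{w}_S)\mid\mathcal{E}_v)$ for scalar functions against $M$, the analogue of Theorem \ref{rimpiazzo}. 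Both steps hinge on the $B_2$-integrability of the occurring products $\widetilde{w}_v\,g_S(\widetilde{w}_S)$ and $\widetilde{w}_v\,g_v(\widetilde{w}_v)$, which I would verify using Assumption \ref{da5a6} together with the Cauchy criterion for $B_2$-integrability on subsets recorded after Definition \ref{ex3.5}.
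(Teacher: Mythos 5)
Your proof is correct and follows essentially the same route as the paper's: both convert $\int_E \widetilde{w}_v\,dQ$ into $\int_E \widetilde{w}_v\,g_v(\widetilde{w}_v)\,dM$ via the substitution $dQ=g_S(\widetilde{w}_S)\,dM$ together with the martingale property of $\{g_t(\widetilde{w}_t)\}_t$ and the pull-out property of conditional expectation, then invoke Assumption \ref{da5a6} for the central step and reverse the argument at time $s$. The only cosmetic difference is that the paper first splits $\widetilde{w}_v=w_v+qv$ and handles the constant term separately before recombining, whereas you keep $\widetilde{w}_v$ intact throughout.
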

\begin{proof}
Fix arbitrarily $s$ and $v$, with $s<v$, and fix $E\in \mathcal{E}_s$.
We observe that
$$\int_E \tilde{w}_vdQ=
\int_Ew_vg_v(\tilde{w}_v)dM+qvQ(E).$$
Since $E\in \mathcal{E}_s$, it is clear that 
$Q(E)=
\int_Eg_s(\tilde{w}_s)dM.$
Therefore,
$$\int_E \tilde{w}_vdQ=\int_E(w_vg_v(\tilde{w}_v)+qvg_v(\tilde{w}_v))dM=\int_E\tilde{w}_vg_v(\tilde{w}_v)dM.$$
By the Assumption \ref{da5a6} it follows then
$$\int_E \tilde{w}_vdQ=
\int_E(w_sg_s(\tilde{w}_s)dM+qsQ(E).$$
But
$$\int_Ew_sg_s(\tilde{w}_s)dM=\int_Ew_sg_S(\tilde{w}_S)dM=\int_Ew_sdQ$$
and in conclusion
$$\int_E \tilde{w}_vdQ=\int_E \tilde{w}_sdQ
,$$
which shows the martingale property.
\end{proof}

\section*{Conclusion}
We have studied some theoretical aspects of the Birkhoff integral, both for scalar valued functions with respect to Banach-valued measures and for the dual situation of vector-valued functions with respect to scalar measures. These previous results are then used in order to state an abstract version of the Girsanov Theorem, where the underlying probability measure  $M$ is Banach-valued. The main results state that, under suitable conditions, a {\em Martingale Equivalent} to $M$ is found, under which the transformed process is a martingale with the same marginals as the original one.

\small


\begin{thebibliography}{99}
\bibitem{bcs2014}  A. Boccuto, D. Candeloro and A. R. Sambucini,   \textit{Henstock multivalued integrability in Banach lattices with respect to pointwise non atomic measures},  Atti Accad. Naz. Lincei Rend. Lincei Mat. Appl. {\bf 26} (4)  (2015), 363-383 Doi: 10.4171/RLM/710.
\bibitem{bms2011} A.  Boccuto, A. M. Minotti and  A. R. Sambucini,  \textit{Set-valued Kurzweil-Henstock integral in Riesz space setting}, PanAm. Math. J., {\bf 23} (1), 57--74, (2013).  
\bibitem{bs2012} A. Boccuto,  A. R. Sambucini,  \textit{A note on comparison between Birkhoff and Mc Shane integrals for multifunctions}, Real Analysis Exchange,  {\bf 37} (2), 3--15,  (2012).
Doi: 10.14321/realanalexch.37.2.0315.
\bibitem{ccgs2015a} D. Candeloro, A. Croitoru, A. Gavrilu\c{t},  A.R. Sambucini, \textit{An Extension of the Birkhoff Integrability for Multifunctions}, Mediterr. J. Math., 
\textbf{13} (5),  2551--2575, (2016), 
Doi: 10.1007/s00009-015-0639-7. 
\bibitem{cdpms1} D. Candeloro, L. Di Piazza, K. Musial, A.R. Sambucini,  \textit{ Gauge integrals and selections of weakly compact valued multifunctions}, J. of Math. Anal. Appl.,  {\bf 441}, (1),   293--308,  (2016), Doi: 10.1016/j.jmaa.2016.04.009. 
\bibitem{cdpms2} D. Candeloro, L. Di Piazza, K. Musial, A.R. Sambucini,  \textit{ Relations among gauge and Pettis integrals for multifunctions with weakly compact convex values}, Annali di Matematica,   197 (1) 171-183, (2018) Doi: 10.1007/s10231-017-0674-z
\bibitem{cdpmsxweber} D. Candeloro, L. Di Piazza, K. Musial, A.R. Sambucini, \textit{Some new results on integration for multifunction},  in press in Ricerche di Matematica, Doi: 10.1007/s11587-018-0376-x.
\bibitem{cmlsxweber} D. Candeloro, C.C.A. Labuschagne, V. Marraffa, A.R. Sambucini, \textit{ Set-valued Brownian motion}, in press in Ricerche di Matematica Doi: 10.1007/s11587-018-0372-1,
\bibitem{cs2014} D. Candeloro, A. R. Sambucini,   \textit{Order-type Henstock and Mc Shane integrals in Banach lattice setting},  Proceedings of the  SISY 2014 - IEEE 12th International Symposium on Intelligent Systems and Informatics, pages 55--59; ISBN 978-1-4799-5995-2 (2014). Doi: 10.1109/SISY.2014.6923557
\bibitem{cs2015} D. Candeloro, A. R. Sambucini,  \textit{Comparison between some norm and order gauge integrals in Banach lattices},  PanAm. Math. J. {\bf 25}(3),    1-16,  (2015).
\bibitem{cr2004} B. Cascales, J. Rodr\'{\i}guez,   \textit{Birkhoff integral for multi-valued functions}, J. Math. Anal. Appl., {\bf 297} (2004), 540--560. Doi:10.1016/j.jmaa.2004.03.026.
\bibitem{cr2005} B. Cascales, J. Rodr\'{\i}guez,   \textit{The Birkhoff integral and the property of Bourgain}, Math. Ann. {\bf 331} (2005), No. 2, 259--279. Doi: 10.1007/s00208-004-0581-7.
\bibitem{cc} K. Cicho\'n, M. Cicho\'n,   \textit{Some Applications of Nonabsolute Integrals in the Theory of Differential Inclusions in Banach Spaces}, G.P. Curbera,G. Mockenhaupt, W.J. Ricker (Eds.), Vector Measures, Integration and Related Topics, in: Operator Theory: Advances and Applications,  vol. 201, BirHauser-Verlag, ISBN: 978-3-0346-0210-5 (2010), 115--124.
\bibitem{cg2014} A. Croitoru, A. Gavrilu\c{t},  \textit{Comparison between Birkhoff integral and Gould integral}, Mediterr. J. Math. {\bf 12} (2015), 329--347, Doi:
10.1007/50009-014-0410-5.
\bibitem{cgsub1} A. Croitoru, A. Gavrilu\c{t} and A. E.  Iosif,  \textit{Birkhoff weak integrability of multifunctions}, International Journal of Pure Mathematics {\bf 2},  47--54, (2015).
\bibitem{cg2016} A. Croitoru, A. Iosif, N. Mastorakis and  A. Gavrilu\c{t},  \textit{Fuzzy multimeasures in Birkhoff weak set-valued integrability},
 2016 Third International Conference on Mathematics and Computers in Sciences and in Industry (MCSI), Chania, (2016),  128-135.
Doi: 10.1109/MCSI.2016.034
\bibitem{fremlin} D.H. Fremlin,   \textit{The Mc Shane and Birkhoff integrals of vector-valued functions}, University of Essex Mathematics Department
Research Report 92-10, (2004)\\ available at URL
http:/www.essex.ac.uk/maths/staff/fremlin/preprints.htm
\bibitem{vm} V. Marraffa,   \textit{A Birkhoff type integral and the Bourgain property in a locally convex space}, Real Analysis Exchange, 2006-2007, {\bf 32} (2), 409--428. Doi: 10.14321/realanalexch.32.2.0409.
\bibitem{Mik} T. Mikosch, \textit{Elementary Stochastic Calculus (with Finance in view)}, World Scientific Publ. Co., Singapore (1998).
\bibitem{pot2006} M.M. Potyrala,  \textit{The Birkhoff and variational Mc Shane integrals of vector valued functions}, Folia Mathematica, Acta Universitatis Lodziensis {\bf 13}, 31--40, (2006).
\bibitem{pot2007} M.M.  Potyrala,   \textit{Some remarks about Birkhoff and Riemann-Lebesgue integrability of vector valued functions}, Tatra Mt. Math. Publ. {\bf 35},  97--106 (2007).
\bibitem{r2005} J. Rodr\'{\i}guez, {\em On the existence of Pettis integrable functions which are not
Birkhoff integrable}, Proc. Amer. Math. Soc. 
{\bf 133} (4),  1157--1163, (2005). Doi:
10.1090/S0002-9939-04-07665-8.
\bibitem{r2009b} J. Rodr\'{\i}guez, {\em Some examples in vector integration}, Bull. of the Australian Math. Soc. {\bf 80} (3), 384--392,  (2009). Doi:
10.1017/S0004972709000367.
\end{thebibliography}
\end{document}